\documentclass[10pt, leqno]{amsart}

\usepackage{Style}

\title{Resonance of rank-two vector bundles over elliptic curves}

\author{C\u alin Spiridon}

\address{University of Bucharest, Faculty of Mathematics and Informatics, Academiei Str. 14 Bucharest, Romania \& "Simion Stoilow" Institute of Mathematics of the Romanian Academy, P.O. Box 1-764, Bucharest, Romania}

\email{cspiridon@imar.ro}

\thanks{The author has been partly funded by the project PNRR-III-C9-2022-I8 "Cohomological Hall algebras of smooth surfaces and applications" - CF 44/14.11.2022.}
\thanks{The author thanks professor Marian Aprodu for helpful discussions.}

\date{\today}

\begin{document}

\begin{abstract}
In this note, we study the resonance variety of rank-two vector bundles over an elliptic curve. Our approach is based on analyzing the flattening stratification of the resonance, as introduced in \cite{aprodu_linear_2025}. We also investigate the linear section of the Grassmann variety $\gr(2,n)$ from which the resonance is constructed through the lens of its corresponding flattening stratification.
\end{abstract} 

\maketitle


\section{Introduction}

Initially arising in the study of hyperplane arrangements, the resonance varieties of a space are cohomological jump loci that reflect an interesting interplay of topology, geometry and combinatorics. 
Inspired by the first resonance variety of a group in the framework of geometric group theory, Papadima and Suciu \cite{papadima_vanishing_2015} introduced, in a purely algebraic setup, the resonance variety of a pair $(V,K)$, where $K$ is a linear subspace in the second exterior power of a vector space $V$. As immediately pointed out by the mentioned authors, these varieties are support loci for some naturally defined graded modules over the symmetric algebra of $V$. We indicate \cite{aprodu_green_2019}, \cite{aprodu_topological_2022}, \cite{aprodu_koszul_2024}, \cite{aprodu_reduced_2024} for a systematic and multi-angled study of these objects, together with some remarkable applications.

A fruitful source of examples comes from algebraic vector bundles. This notion was introduced in \cite{aprodu_koszul_2024} and we briefly recall the definition. Let $X$ be a smooth projective variety and $E$ a vector bundle on $X$. Two linearly independent global sections of $E$ are said to \emph{resonate} if they generate a rank-one subsheaf of $E$. Such global sections are called \emph{resonant}. The locus $\r(E)$ of classes of resonant sections in the projectivization $\p H^0(X,E)$ is called the \emph{resonance variety} associated to the vector bundle $E$. It turns out that, in some cases, these varieties can reflect certain geometric properties of the vector bundle. It is also worth noting that, although not defined as such at the time, the resonance of a vector bundle already appeared in the work of S. Mukai \cite{mukai_curves_1992} or C. Voisin \cite{voisin_greens_2002}, \cite{voisin_greens_2005}.

Building on \cite{aprodu_reduced_2024}, it was shown in \cite{aprodu_linear_2025} that the resonance of any vector bundle admits a natural finite stratification, called the \emph{flattening stratification}, which is equipped with a family of morphisms to a Quot scheme. When $X$ is a projective curve, the situation is even more favorable, since these strata admit algebraic maps to some Brill-Noether loci, that are better understood. In \cite{aprodu_linear_2025}, a detailed study was carried out in the case of rank-two bundles over the projective line. This note aims to investigate the next natural case, that of rank-two bundles over elliptic curves.

\medskip

After briefly reviewing the general definition of the resonance variety, with a particular emphasis on the resonance constructed from vector bundles on curves, we recall the flatte\-ning stratification $\r(E) = \coprod_d \r_d(E)$ we will work with and its main properties, as presented in \cite{aprodu_linear_2025}. We then focus on rank-two bundles $E$ over elliptic curves, dividing our analysis into the split and non-split cases. The non-split case is straightforward to handle due to Atiyah’s classification, see Proposition \ref{prop:non_split}, whereas the split case requires a more detailed study. We begin by examining several specific bundles and computing the corresponding resonance in Proposition \ref{prop:particular_cases}, \ref{prop:2P_bQ} and Examples \ref{ex:2P_not_sim_2Q}, \ref{ex:2P_sim_2Q}. We then show in Theorem \ref{thm:general_split_bundles} --- together with Examples \ref{ex:3P_not_sim_3Q} and \ref{ex:3P_sim_3Q} --- that the resonance of a direct sum of two line bundles of degree at least $3$ coincides with the closure of the first non-empty stratum of its flattening stratification. Moreover, the resonance of such bundles has a unique irreducible component of maximal dimension. A similar analysis is carried out for the corresponding linear section $\g(E)$ of the Grassmannian $\gr(2, H^0(E))$, with special attention to its irreducible components, see Proposition \ref{prop:ired_comp_g(E)} and Corollary \ref{cor:ired_g(E)}. Throughout this note, we work over $\c$.

\section{Preliminaries}

\subsection{Resonance varieties} 

Let $V$ be a complex vector space of finite dimension and $K \subseteq \bigwedge^2V$ a linear subspace. 
Let us also consider $K^\perp := (\bigwedge^2V/K)^\vee$ the orthogonal complement of $K$. To these data one can attach the following uniruled projective variety
\[
\r(V,K) := \left\{[a]\in \p V^\vee : \exists \ b\in V^\vee \ \text{such that}\ 0\neq a \wedge b \in K^\perp \right\}
\]
which is called the (projectivized) \emph{resonance variety} of the pair $(V,K)$. 
This definition goes back to Papadima and Suciu \cite[\S 2]{papadima_vanishing_2015}, who introduced and worked with the affine cone over $\r(V,K)$. 
If $\g = \gr(2, V^\vee)$ denotes the Grassmann variety of all two-dimensional linear subspaces of $V^\vee$, seen as a projective variety via the Pl\"ucker embedding $\g \hooklongrightarrow \p(\bigwedge^2V^\vee)$, and 
\begin{equation} \label{eq:incidence}
\raisebox{-0.5\height}{
\begin{tikzpicture}
    \node (A) at (0,0) {$\{([a], \Lambda): a \in \Lambda\} \ =$};
    \node (B) [right = 0pt  of A] {$\Xi_V$};
    \node (G) [right = 20pt of B] {$\g$};
    \node (P) [below = 20pt of B] {$\p V^\vee$};
    
    \draw[->] (B) -- (G) node[midway, above] {$\scriptstyle \text{pr}_2$};
    \draw[->] (B) -- (P) node[midway, left] {$\scriptstyle \text{pr}_1$};
\end{tikzpicture}
}   
\end{equation}
is the incidence diagram of $\p V^\vee$ and $\g$, then $\r(V,K) = \text{pr}_1(\text{pr}_2^{-1}(\g \cap \p K^\perp))$.
In particular, the resonance is covered by lines.
As already noticed in \cite{papadima_vanishing_2015}, the resonance variety turns out to be the support locus of a finitely generated graded module $W(V,K)$ over the symmetric algebra of $V$, called the \emph{Koszul module} of the pair $(V,K)$. 
This algebro-geometric connection has been widely exploited, for instance, in \cite{aprodu_green_2019}, \cite{aprodu_reduced_2024}, \cite{aprodu_koszul_2024}. 
In this note, however, we will focus on the resonance itself.

\begin{rem} \label{rem:connected}
In general, if non-empty, the resonance $\r(V,K)$ is not necessarily an irreducible variety. However, if $\dim K < 2 \cdot \dim V - 4$, it is connected. Indeed, the last inequality is equivalent to
\[
\dim \g + \dim \p K^\perp > \dim \p(\textstyle{\bigwedge^2V^\vee}) \iff \codim \p K^\perp < \dim \g
\]
and by \cite[Theorem 2.1]{fulton_connectivity_1981} we obtain that the linear section $\g \cap \p K^\perp$ of the Grassmannian $\g$ is connected. Since $\Xi_V$ is a $\p^1$-bundle over $\g$, we immediately get the connectedness of the resonance by the incidence diagram (\ref{eq:incidence}).
\end{rem}

\subsection{Resonance of vector bundles over curves}

Let $X$ be a smooth projective variety and $E$ a vector bundle on $X$ of rank at least two. 
The pair $(X,E)$ gives rise to a resonance variety in the following natural way, see \cite{aprodu_koszul_2024}. Consider the second determinant map of $E$
\[
d_2: \textstyle{\bigwedge^2}H^0(X,E) \longrightarrow H^0(X,\textstyle{\bigwedge^2E})
\]
and take $V$ to be $H^0(X,E)^\vee$ and $K$ to be the orthogonal complement of the kernel of $d_2$. 
Put it differently, $V^\vee = H^0(X,E)$ and $K^\perp = \ker(d_2)$. 
We obtain in this way the resonance variety $\r(X,E): = \r(V,K)$ associated with vector bundle $E$. 
When the projective variety $X$ is clear from the context, we will simply write $\r(E)$ for the resonance of $E$.

By definition, $\r(E)$ is covered by the projective lines determined by the pairs of non-zero independent sections of $E$ generating a rank-one subsheaf of $E$. 
Indeed, the kernel of the determinant map is formed by those wedge products $s \wedge t$ such that $s$ and $t$ are linearly dependent in each fiber of $E$.

It is now an appropriate moment to fix some terminology.
By a \emph{sub-line bundle} of $E$ we mean a locally free subsheaf of $E$ of rank one. 
A \emph{subpencil} of $E$ is a sub-line bundle having at least two independent global sections.
A sub-line bundle $\cL$ is called \emph{saturated} if the quotient $E/\cL$ is torsion-free.
Note that any subsheaf $\cF \subseteq E$ can be saturated by considering 
\[
\cF^{\mathrm{sat}}:=\ker(E \longrightarrow (E/\cF)/(\mathrm{tors}(E/\cF))).
\]
$\cF^{\mathrm{sat}}$ is called the \emph{saturation} of $\cF$, see \cite[Definition 1.1.5]{huybrechts_2010}.

As explained in \cite[\S 6]{aprodu_reduced_2024} and \cite{aprodu_linear_2025}, the set $\w(E)$ of all saturated subpencils of $E$ plays an important role in the geometry of $\r(E)$. 
This set can be endowed with a natural projective scheme structure and one can define a map $\rho: \r(E) \longrightarrow \w(E)$ by sending the class of a resonant section $s \in H^0(X,E)$ to the saturation of the sheaf $s$ generates. 
The map $\rho$ may not be a morphism of algebraic varieties. 
However, a suitable stratification of $\w(E)$, which induces via pullback a stratification on the resonance, yields a finite family of natural morphisms between the corresponding strata. 
In the sequel, we shall describe this stratification, along with its main properties, in the case where $X$ has dimension one, following \cite[\S 4]{aprodu_linear_2025}.
This curve setup enjoys some important particular features. The general case was treated in \cite[\S 3]{aprodu_linear_2025}.

\medskip

Let now $C$ be a smooth projective curve and $E$ a vector bundle on $C$ of rank at least two. 
For any $d \in \z$, denote by $\w_d(E)$ the set of all saturated subpencils of $E$ of degree $d$ and by $\r_d(E)$ the inverse image of $\w_d(E)$ via $\rho$. 
The subsets $\r_d(E)$ form the \emph{flattening stratification} of the resonance
\[
\r(E) = \textstyle{\coprod_{d \ge 1}} \r_d(E)
\]
and the following result holds true, cf. \cite[Theorem 3.5]{aprodu_linear_2025}.

\begin{thm} \label{thm_flattenning_stratification}
The strata $\r_d(E)$ are locally closed and the maps induced by $\rho$
\[
\r_d(E) \xlongrightarrow{\rho} \w_d(E)
\]
are morphisms of algebraic varieties. Moreover, the strata $\w_d(E)$ and $\r_d(E)$ are non-empty for only finitely many values of $d$ and
\[
\overline{\r_d(E)} \subseteq \textstyle{\bigcup}_{e \ge d} \r_e(E).
\]
In particular, the stratum corresponding to the maximal value of $d$ such that $\r_d(E) \neq \emptyset$ is automatically closed.
\end{thm}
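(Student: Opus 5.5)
We plan to reduce everything to the Quot scheme of rank-one subsheaves of $E$, i.e. to the relative Hilbert-type scheme of sub-line bundles. The first step is to parametrize the degree-$d$ stratum algebraically. Fix $d$ and let $\mathcal{P}_d$ denote the scheme of saturated sub-line bundles $\cL\subseteq E$ of degree $d$. Equivalently, these are quotients $E\to Q$ with $Q$ locally free of rank $\mathrm{rk}E-1$ and fixed degree, so $\mathcal{P}_d$ is an open subscheme of the Quot scheme $\mathrm{Quot}(E)$ with the corresponding Hilbert polynomial, and it is of finite type. Inside it, $\w_d(E)$ is the locus where $h^0(\cL)\ge 2$. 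By semicontinuity applied to the universal sub-line bundle on $\mathcal{P}_d\times C$, this locus is closed in $\mathcal{P}_d$, and so $\w_d(E)$ is locally closed in $\mathrm{Quot}(E)$.

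Finiteness comes next. Any sub-line bundle $\cL\subseteq E$ satisfies $\deg \cL\le \mu_{\max}(E)$, the maximal slope in the Harder--Narasimhan filtration. On the other hand, $h^0(\cL)\ge 2$ forces $\deg\cL\ge 1$. Hence only finitely many $d$ occur, which also justifies indexing the stratification by $d\ge1$.

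For the morphism $\rho$ and local closedness we use a correspondence. Consider the incidence
\[
\widetilde{\r}_d=\{([s],\cL): \cL\in\w_d(E),\ s\in H^0(\cL)\setminus 0\}\subseteq \p H^0(E)\times\w_d(E).
\]
This is a projective bundle (after flattening $h^0$, possibly refining $\w_d$ further by the value of $h^0(\cL)$) over $\w_d(E)$. It projects onto $\r_d(E)$. The projection is injective, because the saturation of $\mathcal{O}\cdot s$ is uniquely determined by $s$: it is the saturation of the image of $s:\mathcal O_C\to E$. Injectivity also uses that $s$ lies in the degree-$d$ saturated pencil exactly when $\rho([s])=\cL$.

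To obtain local closedness we work with the union over $e\ge d$. The set $\bigcup_{e\ge d}\r_e(E)$ consists of those $[s]$ such that $s$ vanishes, as a section of $E$, on a divisor $D$ with $\deg(\cL_s)=\deg D\ge d$ and $h^0\ge2$. Here we use that on a curve the saturation of $\mathcal O\cdot s$ is $\mathcal O(D)$, where $D$ is the zero divisor of $s$. The condition "$s$ vanishes on an effective divisor of degree $\ge d$" is closed: it is the image of the proper map from $\{(D,[s]) : s\in H^0(E(-D))\}\subseteq C^{(e)}\times\p H^0(E)$. The resonance condition is closed as well. Hence each union $\bigcup_{e\ge d}\r_e$ is closed. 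Since $\r_d$ is this union minus $\bigcup_{e\ge d+1}\r_e$, it is locally closed, and its closure lies in $\bigcup_{e\ge d}\r_e$. The maximal stratum is therefore closed.

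Finally, $\rho$ is a morphism on $\r_d$. On this stratum the degree of the zero divisor $D_s$ is constant, so $s\mapsto D_s\in C^{(d)}$ is a morphism; this is a flat-family argument. Then $\cL_s=\mathcal O(D_s)\hookrightarrow E$ varies algebraically and gives a map to the Quot scheme. The main obstacle we expect is exactly this step. We must verify that constancy of $\deg D_s$ makes the family of quotients $E/\mathcal O(D_s)$ flat over the reduced stratum, which is the flattening statement, and for this we would invoke the universal property of the flattening stratification of the cokernel of $\mathcal O\to E$ over $\r(E)\times C$.
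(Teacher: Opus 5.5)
The paper does not prove this statement; it imports it from \cite[Theorem 3.5]{aprodu_linear_2025}, so your argument has to stand on its own. Most of it does. The finiteness bound $1\le d\le\mu_{\max}(E)$ is fine. Your local-closedness step is also correct: since $\rho([s])=\o(D_s)$, one has $\bigcup_{e\ge d}\r_e(E)=\r(E)\cap\{[s]:\deg D_s\ge d\}$. The second set is closed as the image of the closed incidence $\{(D,[s]): s|_D=0\}\subseteq C^{(d)}\times\p H^0(E)$. Use $C^{(d)}$ rather than $C^{(e)}$ here; closedness of the incidence comes from the evaluation map $H^0(E)\otimes\o\to E^{[d]}$ on $C^{(d)}$.

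The gap is the last step, showing that $\rho$ is a morphism on $\r_d(E)$. The sheaf you propose to flatten is the cokernel of the universal map $p_1^*\o(-1)\to p_2^*E$ on $\p H^0(E)\times C$. But this cokernel is already flat over all of $\p H^0(E)$. The map is injective on every fiber $\{[s]\}\times C$, because $s\neq 0$ and $C$ is integral, so by the local flatness criterion its cokernel is flat, with constant Hilbert polynomial (that of $E$ minus that of $\o_C$). Its flattening stratification is therefore trivial and cannot see $d$. The invariant $d$ is carried by the torsion $\o(D_s)/\o\cdot s\cong\o_{D_s}$, which this cokernel does not separate out. For the same reason, ``$s\mapsto D_s$ is a morphism by a flat-family argument'' is not yet an argument. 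No family with fibers $D_s$ has been produced, and the family $E/\o(D_s)$ whose flatness you want is not yet defined over $\r_d(E)\times C$.

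The object that works is the zero scheme $Z\subseteq\p H^0(E)\times C$ of the universal section $\sigma$ of $p_1^*\o(1)\otimes p_2^*E$, that is, $\o_Z=\coker\bigl(\sigma^\vee\colon p_1^*\o(-1)\otimes p_2^*E^\vee\to\o\bigr)$.
\begin{enumerate}
\item Its formation commutes with base change, $Z\to\p H^0(E)$ is finite, and $Z_{[s]}=D_s$ has length $\deg D_s$. So the $\r_d(E)$ are, set-theoretically, the flattening strata of $\o_Z$ over $\r(E)$, and upper semicontinuity of this length re-proves your closedness step.
\item Over $\r_d(E)$ with its reduced structure, the fiber length is constantly $d$. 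Hence $\mathcal D:=Z\times_{\p H^0(E)}\r_d(E)$ is flat, and so is a relative effective Cartier divisor in $\r_d(E)\times C$. This gives your map to $C^{(d)}$.
\item By construction $\sigma$ lies in $I_Z\cdot\bigl(p_1^*\o(1)\otimes p_2^*E\bigr)$. So its restriction is a section of $p_1^*\o(1)\otimes p_2^*E\otimes\o(-\mathcal D)$, i.e.\ a map $p_1^*\o(-1)\otimes\o(\mathcal D)\to p_2^*E$.
\item On each $\{[s]\}\times C$ this map restricts to the saturated inclusion $\o(D_s)\hookrightarrow E$, so it vanishes nowhere. Its cokernel is therefore locally free and flat over $\r_d(E)$.
\item This gives an $\r_d(E)$-point of the Quot scheme. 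It factors through $\w_d(E)$ because $\r_d(E)$ is reduced, and it induces $\rho$.
\end{enumerate}
With this replacement your proof goes through.
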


Furthermore, considering the closed subschemes
\[
W^1_d(E) := \{ L \in W^1_d(C) :  h^0(E(-L)) \neq 0 \}
\]
of the Brill-Noether loci $W^1_d(C) \subseteq \pic^d(C)$ consisting of all isomorphism classes of degree $d$ line bundles with at least two independent sections, we readily see, by the modularity property of the Jacobian of $C$, that there is a natural map
\[
\w_d(E) \longrightarrow W^1_d(E)
\]
sending a saturated subpencil $\cL$ of degree $d$ to its isomorphism class $L$ in the Picard group of $C$.
By Theorem \ref{thm_flattenning_stratification}, it follows that each stratum $\r_d(E)$ of the resonance is equipped with a morphism
\[
\rho_d : \r_d(E) \longrightarrow W^1_d(E).
\]
Note that the fiber of $\rho_d$ over a point $L \in W^1_d(E)$ is precisely the image of $\p(H^0(L)) \times U_L$ via the map
\[
\theta_L:\p(H^0(L))\times \p(H^0(E(-L))) \longrightarrow \p(H^0(E))
\]
induced by the multiplication of sections
\[
\mu_{L,E(-L)}:H^0(L)\otimes H^0(E(-L)) \longrightarrow H^0(E),
\]
where $U_L \subseteq \p(H^0(E(-L)))$ is the locus giving saturated embeddings of $L$ into $E$. 
Since being saturated is an open property, we infer the following.

\begin{rem} \label{rem:U_L}
If non-empty, $U_L$ is open dense in $\p(H^0(E(-L)))$ and the closure of the fiber of $\rho_d$ over a point $L \in W^1_d(E)$ is the image of $\theta_L$. Therefore, the closure of the stratum $\r_d(E)$ is set-theoretically the disjoint union of the images of $\theta_L$, for all $L \in W^1_d(E)$ such that $U_L$ is non-empty.
\end{rem}

\begin{rem} \label{rem_g(E)}
The linear section $\g(E): = \g \cap \p K^\perp$ of the Grassmannian $\g = \gr(2, H^0(E))$ admits a similar stratification by locally closed subsets as the resonance $\r(E)$, namely
\[
\g(E) = \textstyle{\coprod_{d \ge 1}} \g_d(E),
\]
where $\g_d$ is formed by those two-dimensional subspaces of $H^0(E)$ generating a subpencil of degree $d$. Each stratum $\g_d(E)$ admits a morphism to $W^1_d(E)$, whose fibers are the images of $\gr(2, H^0(L)) \times U_L$ via the map
\[
\tau_L:\gr(2, H^0(L)) \times \p(H^0(E(-L))) \longrightarrow \gr(2, H^0(E))
\]
induced by the multiplication map $\mu_{L,E(-L)}$. See \cite[\S 3-4]{aprodu_linear_2025} for details.
\end{rem}

\subsection{Rank-two vector bundles over an elliptic curve}

Let $C$ be an elliptic curve, that is, a smooth projective curve of genus $1$. 
Recall that $\pic^d(C) \cong  \pic^0(C) \cong C$ and the map $C \longrightarrow \pic^d(C)$, $P \longmapsto \o_C(dP)$ is a finite morphism of degree $d^2$, for any $d \in \z \setminus \{0\}$. 
Also, by Riemann-Roch, $h^0(L) = \deg L$, for any line bundle $L$ on $C$ of positive degree. 

Consider now a rank-two vector bundle $E$ over $C$. 
The bundle $E$ can be either split, that is $E = \o_C(aP) \oplus \o_C(bQ)$, where $a \le b$ are integers and $P, Q$ are closed points on $C$, or non-split, this latter case being classified by Atiyah \cite{atiyah_1957}, \cite[\S 5]{hartshorne_1977} as follows. 
If the degree of $E$ is even, $\deg E = 2d$, there exists a point $P \in C$ such that $E$ is the unique non-trivial extension
\begin{equation} \label{eq:even}
0 \longrightarrow \o_C(dP) \longrightarrow E \longrightarrow \o_C(dP) \longrightarrow 0
\end{equation}
and if the degree of $E$ is odd, $\deg E = 2d + 1$, there exists a pair of closed points $(P,Q)$ such that
\begin{equation} \label{eq:odd}
0 \longrightarrow \o_C(dP) \longrightarrow E \longrightarrow \o_C(dP + Q) \longrightarrow 0   
\end{equation}
Conversely, any integer $d$ and any closed point $P \in C$ determine a unique non-split rank-two bundle $E$ over the curve $C$. 
Furthermore, any extension by line bundles of $E$ has to be of type (\ref{eq:even}) or (\ref{eq:odd}).
In particular, if $\cL$ is a saturated sub-line bundle of a non-split bundle $E$, then the quotient $E/\cL$ is also a line bundle --- being a torsion-free sheaf on a curve --- and thus, $E$ can be written as an extension of line bundles
\[
0 \longrightarrow \cL \longrightarrow E \longrightarrow E/\cL \longrightarrow 0 .
\]
By Atiyah's classification, $\cL$ is isomorphic to $\o_C(dP)$, where $d = \lfloor \frac{\deg E}{2} \rfloor$.

\section{Description of the resonance}

In the sequel, we analyze the flattening stratification of the resonance of a rank-two bundle $E$ on an elliptic curve $C$.

We begin with the non-split case by considering $E$ a non-trivial extension of type (\ref{eq:even}) or (\ref{eq:odd}).
Since $\o_C(dP)$ is the unique saturated sub-line bundle of $E$, the resonance of $E$ is trivial if $d \le 1$.
Thus, we may assume that the degree of $E$ is at least $4$.
Note that $h^0(E(-dP)) = 1$, for otherwise the extension defining $E$ would be split.
Therefore, $\o_C(dP)$ embeds uniquely (up to scalar multiplication) in $E$ and the fiber over $\o_C(dP)$ of the map $\rho_d : \r_d(E) \longrightarrow W^1_d(E)$ is exactly $\p H^0(\o_C(dP)) \cong \p^{d-1}$.
Consequently, we obtain the following.

\begin{prop} \label{prop:non_split}
If $E$ is a non-split rank-two vector bundle over $C$,
\[
\r(E) = \begin{cases}
      \emptyset & \text{if}\ \deg E < 4; \\
      \p^{d-1}  & \text{if}\ \deg E \ge 4.
    \end{cases}  
\]
where $d = \lfloor \frac{\deg E}{2} \rfloor$.
\end{prop}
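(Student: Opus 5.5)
The plan is to read off $\r(E)$ from the flattening stratification $\r(E)=\coprod_{d'}\r_{d'}(E)$ of Theorem \ref{thm_flattenning_stratification}. The key input is the consequence of Atiyah's classification recalled above: every saturated sub-line bundle of the non-split bundle $E$ is isomorphic to $\o_C(dP)$, where $d=\lfloor \deg E/2\rfloor$. It follows that $\w_{d'}(E)=\emptyset$ for $d'\neq d$, hence $\r_{d'}(E)=\emptyset$ for $d'\ne d$, and $\r(E)=\r_d(E)$. This single stratum is closed, since it is the stratum with maximal $d$.

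First I treat the case $\deg E<4$, where $d\le 1$. A subpencil must have at least two independent sections. Since $h^0(\o_C(dP))=d\le 1$ by Riemann--Roch (or $h^0=0$ when $d\le 0$), there is no saturated subpencil. Every resonant section generates a rank-one subsheaf whose saturation would be a subpencil, because two independent sections lie in it. So $\r(E)=\emptyset$.

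Next I treat the case $\deg E\ge 4$, where $d\ge 2$. Here $W^1_d(E)$ is the single point $L=\o_C(dP)$, so $\r(E)$ is the fiber of $\rho_d$ over $L$. I need $h^0(E(-L))=1$. Twisting the extension (\ref{eq:even}) or (\ref{eq:odd}) by $L^{-1}$ gives $0\to\o_C\to E(-L)\to M\to 0$, with $M=\o_C$ or $\o_C(Q)$.
\begin{itemize}
\item In the odd case $h^0(\o_C(Q))=1$ and $H^1(\o_C)\cong\c$. The connecting map $H^0(M)\to H^1(\o_C)$ is nonzero exactly when the extension is non-split, so $h^0(E(-L))=1$.
\item In the even case the same argument applies: the extension class is a nonzero element of $H^1(\o_C)$, so the coboundary of $1\in H^0(\o_C)$ is nonzero.
\end{itemize}
Thus $\p H^0(E(-L))$ is a point. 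Its section gives the saturated embedding $L\hookrightarrow E$ from the extension, so $U_L$ is that point. The map $\theta_L$ then becomes $\p H^0(L)\to\p H^0(E)$, the projectivization of the injective map $H^0(L)\to H^0(E)$, and it is an isomorphism onto a linear $\p^{d-1}$. Conversely, every point of this $\p^{d-1}$ is resonant, because $d\ge2$ lets us pick another independent section of $L$ generating the same rank-one subsheaf. Hence $\r(E)\cong\p^{d-1}$.

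The main obstacle is the cohomological check $h^0(E(-L))=1$ in the even case, that is, verifying that the coboundary is injective. Everything else follows formally from the stratification and Remark \ref{rem:U_L}.
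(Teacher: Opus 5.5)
Your proposal follows the paper's own argument essentially step by step, and the computations you carry out are correct, in particular $h^0(E(-dP))=1$ via the coboundary. The gap is the ``key input'': that every saturated sub-line bundle of a non-split $E$ is isomorphic to the fixed $\o_C(dP)$. Atiyah's classification gives \emph{one} presentation of $E$ of the form (\ref{eq:even}) or (\ref{eq:odd}). It does not say that every saturated $\cL\subset E$ yields such a presentation with the same $P$. The paper's argument rests on the same assertion, so following it does not close the gap, and the assertion is false once $\deg E\ge 5$.

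If $\deg E=2d+1$, then $E$ is stable, so for \emph{every} $L\in\pic^d(C)$ the bundle $E\otimes L^{-1}$ is stable of degree $1$. Hence $h^1(E\otimes L^{-1})=h^0(E^\vee\otimes L)=0$ and $h^0(E\otimes L^{-1})=1$. The resulting map $L\to E$ is saturated, because stability bounds the degree of sub-line bundles by $d$. So for $d\ge 2$ the stratum $\r_d(E)$ is the union of the pairwise disjoint linear spaces $\p H^0(L)$, $L\in\pic^d(C)\cong C$, and has dimension $d$, not $d-1$. For $\deg E=5$ you can confirm this by a count: $\dim\ker d_2\ge 10-5$, so every component of $\g(E)=\gr(2,5)\cap\p(\ker d_2)$ has dimension at least $6-5=1$. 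Thus $\r(E)$ contains a positive-dimensional family of distinct lines and cannot be a single $\p^1$.

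The even case fails as well, through lower strata. Write $E\cong F\otimes L_0$ with $L_0=\o_C(dP)$, $d\ge 3$, and $F$ the non-trivial extension of $\o_C$ by $\o_C$. For $M\in\pic^{d-1}(C)$ put $N=L_0\otimes M^{-1}$, which has degree $1$; then $h^0(E\otimes M^{-1})=h^0(F\otimes N)=2$. A section vanishing at a point $x$ lies in $H^0(F\otimes N(-x))$. For $N'\in\pic^0(C)$, the bundle $F\otimes N'$ has sections only when $N'\cong\o_C$, and $h^0(F)=1$. So this space is non-zero only for the unique $x$ with $N\cong\o_C(x)$, and is then a line. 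Hence a general section vanishes nowhere, and every such $M$ is a saturated subpencil. So $\r_{d-1}(E)\neq\emptyset$, and $\r(E)$ strictly contains $\p H^0(L_0)$ and has dimension at least $d$.

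What your argument does establish is the range $\deg E\le 4$, once the false input is replaced by semistability of $E$. Every sub-line bundle has degree at most $d$: map it to the quotient in (\ref{eq:even}) or (\ref{eq:odd}), using non-splitness in the odd case. For $\deg E<4$ there are then no subpencils. For $\deg E=4$, a degree-$2$ sub-line bundle $M$ needs $H^0(F\otimes L_0M^{-1})\neq 0$ with $\deg L_0M^{-1}=0$, which forces $M\cong L_0$; your computation then gives $\r(E)\cong\p^1$.

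For $\deg E\ge 5$ the claimed equality $\r(E)=\p^{d-1}$ does not hold. A correct description has to account for all strata $\r_{d'}(E)$ with $2\le d'\le d$ and, in odd degree, for the whole family $\pic^d(C)$. Minor slip: $h^0(\o_C)=1$, not $0$.
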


\begin{rem} \label{rem:ambient}
Note that the resonance of non-split rank-two vector bundles over an elliptic curve is a projective subspace. In particular, it is a linear variety. The ambient space of the resonance of such a vector bundle $E$ identifies with $\p^{\deg E - 1}$. Thus, if $E$ is of type (\ref{eq:even}), then $\r(E) \cong \p^{d - 1} \hooklongrightarrow \p^{2d - 1}$ and if $E$ is of type (\ref{eq:odd}), then $\r(E) \cong \p^{d - 1} \hooklongrightarrow \p^{2d}$.
\end{rem}

We now pass to the split case and consider $E = \o_C(aP) \oplus \o_C(bQ)$, with $a \le b$ and $P,Q \in C$. The following general result holds true.

\begin{lem} \label{lem:saturated_embedding}
If $L, L_1$ and $L_2$ are line bundles on a smooth projective curve $X$, then any saturated embedding of $L$ into the direct sum $L_1 \oplus L_2$ is determined by a pair of global sections $(s_1, s_2) \in H^0(L_1 \otimes L^\vee) \oplus H^0(L_2 \otimes L^\vee)$ without common zeros.
\end{lem}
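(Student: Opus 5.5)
The plan is to use the universal property of the direct sum, and then read off saturation locally, fiber by fiber.

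First, a morphism $\varphi: L \to L_1 \oplus L_2$ is the same as a pair of morphisms $\varphi_i : L \to L_i$, $i=1,2$, obtained by composing with the projections. Since $\mathcal{H}om(L, L_i) \cong L_i \otimes L^\vee$, each $\varphi_i$ corresponds to a global section $s_i \in H^0(L_i \otimes L^\vee)$. Conversely, every pair $(s_1,s_2)$ gives a morphism $\varphi = (s_1,s_2)$. This correspondence is a bijection, and it is compatible with rescaling the embedding. So any morphism $L \to L_1\oplus L_2$, in particular any embedding, is determined by such a pair. What remains is to say which pairs give saturated embeddings.

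Second, I will show that $\varphi$ is injective with torsion-free cokernel exactly when $s_1$ and $s_2$ have no common zero. Work locally around a point $x \in X$. Trivialize $L, L_1, L_2$ on a neighbourhood of $x$, so that $\varphi$ becomes the column vector $(f_1, f_2)^T$ of regular functions, where $f_i$ is the local expression of $s_i$.

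Suppose $s_1, s_2$ have no common zero. Then at every point at least one $f_i$ is a unit. Hence $\varphi$ is injective on the fiber $L(x) \to (L_1\oplus L_2)(x)$ for all $x$, and $\varphi$ is a subbundle inclusion. Its cokernel is then locally free of rank one, in particular torsion-free, so the embedding is saturated.

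Suppose instead that $s_1(x)=s_2(x)=0$ at some $x$, and that $\varphi \neq 0$, so that it is injective as a map of sheaves, $L$ being torsion-free of rank one. Let $t$ be a local uniformizer at $x$ (here I use that $X$ is a smooth curve, so $\mathcal{O}_{X,x}$ is a DVR). Write $f_i = t g_i$ with $g_1, g_2$ not both divisible by $t$. Then the image of $\varphi$ is strictly contained in the subsheaf generated by $(g_1,g_2)$. The class of $(g_1,g_2)$ in the cokernel is nonzero but is killed by $t$, so it is a torsion element. Hence the cokernel has torsion, and the embedding is not saturated. Globally, the saturation is $L(D)$, where $D$ is the common zero divisor of $s_1$ and $s_2$.

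The only mildly delicate point is this second direction: one needs the smoothness of the curve, via the DVR structure of the local rings, to identify the torsion in the cokernel with the common vanishing of $s_1$ and $s_2$. Everything else is formal.
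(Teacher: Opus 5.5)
Your proposal is correct and follows essentially the same route as the paper: identify the embedding with a pair $(s_1,s_2)$ via $\mathcal{H}om(L,L_i)\cong L_i\otimes L^\vee$, trivialize locally, and use that $\mathcal{O}_{X,x}$ is a DVR to show that torsion-freeness of the cokernel is equivalent to $s_1,s_2$ having no common zero. One small slip: you can write $f_i=t g_i$ with $g_1,g_2$ not both divisible by $t$ only when the minimal vanishing order at $x$ is $1$. Either use $t^k$ with $k$ the minimal order, or drop that clause and note that the class of $(g_1,g_2)$ is nonzero anyway: $(g_1,g_2)=h\,t\,(g_1,g_2)$ with $1-ht$ a unit would force $\varphi=0$ near $x$.
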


\begin{proof}
The approach is standard, see for instance \cite[p.~32]{friedman_1998}. To begin with, note that giving an embedding of $L$ into $L_1 \oplus L_2$ is the same as giving a non-zero map
\[
\varphi : \o_X \xlongrightarrow{(s_1, s_2)} L' \oplus L'',
\]
where $L' = L_1 \otimes L^\vee$ and $L'' = L_2 \otimes L^\vee$. The embedding above is saturated if and only if the cokernel of
\[
\varphi_x : \o_{X,x} \xlongrightarrow{(s_1, s_2)} L'_x \oplus L''_x
\]
has no torsion for any $x\in X$. After choosing local trivializations, we may view $\varphi_x$ as a map
\[
\varphi_x : \o_{X,x} \xlongrightarrow{(s_1, s_2)} \o_{X,x} \oplus \o_{X,x}
\]
and derive that $\coker \varphi$ is torsion-free if and only if $s_1$ and $s_2$ are relatively prime in each stalk $\o_{X,x}$. Since $\o_{X,x}$ is a local ring of dimension one, this last condition is equivalent to saying that $s_1$ and $s_2$ do not share common zeros.
\end{proof}

This lemma immediately yields the following two results, which together provide a complete characterization of the saturated sub-line bundles of a split rank-two vector bundle on the elliptic curve $C$.

\begin{cor} \label{cor:saturated_embeddings_a<b}
If $E = \o_C(aP) \oplus \o_C(bQ)$ with $a<b$ and $L = \o_C(dR)$ is a degree $d$ line bundle on $C$, then
\begin{itemize}
    \item [(a)] If $d > b$, then $L$ cannot be embedded into $E$.
    \item [(b)] If $d = b$, then $L$ can be embedded into $E$ if and only if $bR \sim bQ$, that is $L = \o_C(bQ)$. In this case, $L$ embeds uniquely (up to scalar multiplication) and saturated in $E$.
    \item [(c)] If $a < d < b$, then $L$ cannot be embedded into $E$ as a saturated sub-line bundle.
    \item [(d)] If $d = a$, then $L$ can be embedded into $E$ as a saturated sub-line bundle if and only if $aR \sim aP$, that is $L = \o_C(aP)$. In this case, any saturated embedding of $L$ into $E$ is determined by a section of $H^0(\o_C(bQ - aP))$.
    \item [(e)] If $d < a$, then $L$ can be embedded into $E$ as a saturated sub-line bundle. Moreover, any saturated embedding of $L$ in $E$ is determined by a pair of two non-zero sections $(s_a, s_b) \in H^0(\o_C(aP - dR)) \oplus H^0(\o_C(bQ - dR))$ without common zeros. 
\end{itemize}
\end{cor}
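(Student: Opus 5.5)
The plan is to apply Lemma \ref{lem:saturated_embedding} with $L_1 = \o_C(aP)$, $L_2 = \o_C(bQ)$ and $L = \o_C(dR)$. An embedding of $L$ into $E$ is then a nonzero pair
\[
(s_a,s_b) \in H^0(\o_C(aP-dR)) \oplus H^0(\o_C(bQ-dR)),
\]
and it is saturated exactly when $s_a$ and $s_b$ have no common zero. The five items will follow from two facts about a line bundle $M$ on the elliptic curve $C$. First, $h^0(M)=0$ if $\deg M<0$, or if $\deg M = 0$ and $M \not\cong \o_C$. Second, $h^0(\o_C)=1$, with the constant sections nowhere vanishing. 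Third, $h^0(M)=\deg M$ when $\deg M>0$.

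For (a), if $d>b>a$ then both bundles $\o_C(aP-dR)$ and $\o_C(bQ-dR)$ have negative degree, so both spaces of sections vanish and there is no embedding at all. For (b), with $d=b$, the first bundle has negative degree $a-b$, so $s_a=0$ is forced. A nonzero $s_b$ exists if and only if the degree-zero bundle $\o_C(bQ-bR)$ is trivial, that is $bR\sim bQ$. In that case $s_b$ is a nonzero constant, unique up to scalar. Since it has no zeros, the pair $(0,s_b)$ has no common zeros and the embedding is saturated.

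For (c), with $a<d<b$, again $s_a=0$. So saturation would require $s_b$ to be nowhere vanishing. But $s_b$ is a section of the bundle $\o_C(bQ-dR)$ of positive degree $b-d$, so it has zeros. Hence no saturated embedding exists, although non-saturated embeddings do exist. For (d), with $d=a$, the section $s_a$ lies in $H^0(\o_C(aP-aR))$. If this bundle is nontrivial then $s_a=0$, and the argument of (c) applies, using $\deg(bQ-aR)=b-a>0$. If instead $aR\sim aP$, then $s_a$ is a constant, and it must be nonzero, since otherwise we are again in the situation of (c). A nonzero constant has no zeros, so any $s_b\in H^0(\o_C(bQ-aP))$ gives a saturated embedding, and rescaling by $s_a$ shows the embedding is determined by $s_b$ alone.

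For (e), with $d<a$, both degrees $a-d$ and $b-d$ are positive. If $s_a=0$ or $s_b=0$, the remaining section has zeros by positivity of degree, so the embedding is not saturated; hence both sections must be nonzero. The last item is then existence: we must produce nonzero sections $s_a$, $s_b$ with no common zero. Choose any nonzero $s_a$; its zero set $Z$ is finite. The line bundle $\o_C(bQ-dR)$ has degree $b-d\ge 2$, so it is globally generated. Therefore, for each $z\in Z$, the sections vanishing at $z$ form a hyperplane of $H^0(\o_C(bQ-dR))$. A general $s_b$ avoids this finite union of hyperplanes, and the pair $(s_a,s_b)$ then gives a saturated embedding.

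This existence step in (e) is the only point beyond direct degree bookkeeping, and it rests on global generation of line bundles of degree at least $2$ on an elliptic curve.
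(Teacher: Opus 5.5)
Your proposal is correct and takes the paper's approach: the paper asserts that the corollary follows immediately from Lemma \ref{lem:saturated_embedding} together with degree considerations for line bundles on the elliptic curve $C$. Your case-by-case bookkeeping, and the global-generation argument (degree $b-d\ge 2$) for existence in (e), supply exactly the details the paper leaves implicit; the only slip is the harmless phrase ``two facts'' introducing three.
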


\begin{cor} \label{cor_saturated_embeddings_a=b}
If $E = \o_C(aP) \oplus \o_C(aQ)$ and $L = \o_C(dR)$ is a degree $d$ line bundle on $C$, then
\begin{itemize}
    \item [(a)] If $d > a$, then $L$ cannot be embedded into $E$.
    \item [(b)] If $d = a$, then $L$ can be embedded into $E$ if and only if $aR \sim aP$ or $aR \sim aQ$, that is $L = \o_C(aP)$ or $L = \o_C(aQ)$. If $aP \not\sim aQ$, then such an $L$ embeds uniquely (up to scalar multiplication) and saturated in $E$. If $aP \sim aQ$, then the set of saturated sub-line bundles of $E$ isomorphic to $L = \o_C(aP)$ is parametrized by some $\p^1$.
    \item [(c)] If $d = a - 1$, then $L$ embeds into $E$ as a saturated sub-line bundle if and only if $aP \not\sim aQ$. In this case, any saturated embedding of $L$ into $E$ is determined by a pair of non-zero sections $(s_a, t_a) \in H^0(\o_C(aP - (a-1)R)) \oplus H^0(\o_C(aQ - (a-1)R))$.
    \item[(d)] If $d < a -1$, then $L$ can be embedded into $E$ as a saturated sub-line bundle. Moreover, any saturated embedding of $L$ into $E$ is determined by a pair of two non-zero sections $(s_a, t_a) \in H^0(\o_C(aP - dR)) \oplus H^0(\o_C(aQ - dR))$ without common zeros. 
\end{itemize}
\end{cor}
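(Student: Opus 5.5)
Corollary \ref{cor_saturated_embeddings_a=b} follows from Lemma \ref{lem:saturated_embedding} applied to $L_1 = \o_C(aP)$, $L_2 = \o_C(aQ)$ and $L = \o_C(dR)$. Every embedding of $L$ into $E$, saturated or not, is a nonzero pair $(s_a,t_a) \in H^0(\o_C(aP-dR)) \oplus H^0(\o_C(aQ-dR))$, and saturation means $s_a$ and $t_a$ have no common zeros. Throughout I use two facts on the elliptic curve $C$: a line bundle of negative degree has no sections, and a degree-zero line bundle has a section if and only if it is trivial. I also use $h^0(M)=\deg M$ for $\deg M>0$. The cases are distinguished by the degree $a-d$ of the two twisted line bundles.

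\emph{(a) and (b).} If $d>a$, both twists have negative degree, so there are no nonzero pairs and $L$ does not embed. If $d=a$, both twists have degree $0$. A nonzero section exists only if $\o_C(aP-aR)$ or $\o_C(aQ-aR)$ is trivial, that is, $L\cong\o_C(aP)$ or $L\cong\o_C(aQ)$. Any nonzero section of a trivial bundle is a nonzero constant and has no zeros, so every embedding is saturated.

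If $aP\not\sim aQ$, exactly one of the two twists is trivial. The pair is then $(c,0)$ or $(0,c)$ with $c\neq 0$, which gives uniqueness up to scalars. If $aP\sim aQ$, both twists are trivial and the pairs $(c_1,c_2)\neq(0,0)$ are all saturated. Up to scalars they are parametrized by $\p(\c^2)=\p^1$, and distinct points of this $\p^1$ give distinct subsheaves.

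\emph{(c).} Here both twists have degree $1$, so each has a one-dimensional space of sections. The section $s_a$ vanishes exactly at the unique point $x$ with $\o_C(x)\cong \o_C(aP-(a-1)R)$, and $t_a$ vanishes exactly at the point $y$ with $\o_C(y)\cong\o_C(aQ-(a-1)R)$.

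If one of the two sections is zero, the other has a zero, and that zero is then a common zero; so such a pair is not saturated. Thus both $s_a$ and $t_a$ must be nonzero, and the pair has no common zeros if and only if $x\neq y$. Since $x\sim y$ if and only if $aP\sim aQ$, and linearly equivalent points on a curve of genus $1$ coincide, we get $x\neq y$ if and only if $aP\not\sim aQ$. Under this condition every pair of nonzero sections is automatically saturated, which gives the statement.

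\emph{(d).} This is the step I expect to need the most care. Both twists have degree $n=a-d\ge 2$, so each is base point free. Choose $s_a\neq 0$ in $H^0(\o_C(aP-dR))$; it has finitely many zeros. For each zero $z$, the sections of $\o_C(aQ-dR)$ vanishing at $z$ form a hyperplane, because the bundle is base point free. A general $t_a$ avoids these finitely many hyperplanes, so a saturated pair with both sections nonzero exists.

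Conversely, if one section of a saturated pair were zero, the other would have no zeros while having positive degree $n$, which is impossible. Hence both sections of any saturated pair are nonzero and share no zeros.
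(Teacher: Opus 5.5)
Your proof is correct and takes the same route as the paper, which offers no separate argument and simply says the corollary follows immediately from Lemma \ref{lem:saturated_embedding}. Your case split by the degree $a-d$ of the two twists supplies the verification left implicit there, using that degree-zero bundles have sections only when trivial, that $h^0(M)=\deg M$ for $\deg M>0$, that distinct points of $C$ are not linearly equivalent, and that bundles of degree at least $2$ are base point free; the one claim you leave unjustified, that distinct points of the $\p^1$ in (b) give distinct subsheaves, holds because two embeddings of $L$ with the same image differ by an automorphism of $L$, i.e.\ by a nonzero scalar.
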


Since, for computing the resonance, we are interested in the saturated subpencils of $E$, that is, the saturated sub-line bundles of $E$ having at least two independent sections, we immediately obtain the following.

\begin{prop} \label{prop:particular_cases}
If $E = \o_C(aP) \oplus \o_C(bQ)$ and $a \le 1$, then
\[
\r(E) = \begin{cases}
      \emptyset & \text{if}\ b < 2; \\
      \p^{b-1}  & \text{if}\ b \ge 2.
    \end{cases}  
\]
\end{prop}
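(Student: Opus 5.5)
The plan is to list all saturated subpencils of $E=\o_C(aP)\oplus\o_C(bQ)$ with $a\le 1$, using Corollaries \ref{cor:saturated_embeddings_a<b} and \ref{cor_saturated_embeddings_a=b}, and then apply Remark \ref{rem:U_L}. A subpencil has degree $d\ge 2$, because $h^0(L)=\deg L$ for $\deg L>0$ on $C$. Since $a\le 1<2\le d$ whenever a subpencil exists, case (e) of Corollary \ref{cor:saturated_embeddings_a<b} ($d<a$) and case (d) ($d=a$) never produce pencils. Case (c) ($a<d<b$) gives no saturated sub-line bundles at all, and case (a) ($d>b$) gives no sub-line bundles. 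Note that $a=b$ with $a\le1$ cannot contain a subpencil, since degrees are at most $a$. So when $a<b$ the only possible saturated subpencil is $\o_C(bQ)$, of degree $b$, via case (b). It embeds uniquely up to scalars and saturatedly, and it is a pencil exactly when $b\ge 2$.

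If $b<2$, or if $a=b\le 1$, there are no saturated subpencils. Hence $\w(E)=\emptyset$, and by the flattening stratification $\r(E)=\emptyset$. This covers the first case; note $b\ge 2$ forces $a<b$.

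If $b\ge 2$, only the stratum $\r_b(E)$ is non-empty, and $W^1_b(E)=\{\o_C(bQ)\}$. Here $h^0(E(-bQ))=h^0(\o_C(aP-bQ))+h^0(\o_C)$. The first summand vanishes because $a-b<0$, so $\p H^0(E(-bQ))$ is a single point, which lies in $U_L$ by the saturatedness from case (b). The fiber of $\rho_b$ is therefore the image of $\theta_L:\p H^0(\o_C(bQ))\times\{pt\}\to\p H^0(E)$. This map is the inclusion induced by the summand $H^0(\o_C(bQ))\hookrightarrow H^0(E)$, so it is a linear embedding of $\p^{b-1}$. By Theorem \ref{thm_flattenning_stratification}, this unique (maximal) stratum is closed, so $\r(E)=\p^{b-1}$.

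The only delicate point is checking that no smaller-degree subpencil sneaks in through a non-saturated embedding. This is handled because $\rho$ always passes to the saturation, so only saturated subpencils matter. Their degrees are pinned down by the case analysis of Corollary \ref{cor:saturated_embeddings_a<b}.
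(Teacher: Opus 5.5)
Your proposal is correct and follows essentially the same route as the paper. Both use Corollary \ref{cor:saturated_embeddings_a<b} to show that the only saturated subpencil, when $b\ge 2$, is $\o_C(bQ)$, so that $\r(E)=\r_b(E)=\p H^0(\o_C(bQ))\cong\p^{b-1}$, and both note that there are no subpencils at all when $b<2$. You additionally spell out details the paper leaves implicit: the degree bookkeeping and the computation $h^0(E(-bQ))=1$.
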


\begin{proof}
If $b < 2$, then $E$ has no subpencils and hence, the resonance of $E$ is trivial. If $b \ge 2$, by Corollary \ref{cor:saturated_embeddings_a<b}, it follows that the only saturated subpencil of $E$ is $\o_C(bQ)$ and 
\[
\r(E) = \r_b(E) = \p H^0 (\o_C(bQ)) \cong \p^{b-1}.
\]
\end{proof}

It remains to treat the cases where $a \ge 2$. Before addressing the general situation, let us examine a few examples.

\begin{exmp} \label{ex:2P_not_sim_2Q}
Let $E = \o_C(2P) \oplus \o_C(2Q)$, with $P, Q \in C$ and $2P \not\sim 2Q$. According to Corollary \ref{cor_saturated_embeddings_a=b}, the only saturated subpencils of $E$ are $\o_C(2P)$ and $\o_C(2Q)$ and
\[
\r(E) = \r_2(E) = \p H^0(\o_C(2P)) \sqcup \p H^0(\o_C(2Q)) \cong \p^1 \sqcup \p^1.
\]
Therefore, the resonance of $E$ is the disjoint union of $2$ lines in $\p H^0(E) \cong \p^3$. 
\end{exmp}

\begin{exmp} \label{ex:2P_sim_2Q}
Let $E = \o_C(2P) \oplus \o_C(2P)$, with $P \in C$. By Corollary \ref{cor_saturated_embeddings_a=b}, the line bundle $L = \o_C(2P)$ is the only saturated subpencil of $E$ and the fiber over $L$ of the morphism $\rho_2 : \r_2(E) \longrightarrow W^1_2(C)$ is the image of
\[
\theta_L : \p H^0(\o_C(2P)) \times \p H^0(\o_C \oplus \o_C) \longrightarrow \p H^0(E).
\]
Note that $\theta_L$ identifies with the Segre embedding $\sigma_{1,1}: \p^1 \times \p^1 \hooklongrightarrow \p^3$. Consequently, the resonance of $E$ may be viewed as the smooth quadric surface $\p^1 \times \p^1$ inside $\p H^0(E) \cong \p^3$.
\end{exmp}

\begin{rem} \label{rem:smooth_quadric}
It is interesting to note that the resonance from Example \ref{ex:2P_not_sim_2Q} coincides with the resonance of $\o_X(1,0) \oplus \o_X(0,1)$, where $X \cong \p^1 \times \p^1$ is a smooth quadric surface and the resonance from Example \ref{ex:2P_sim_2Q} coincides with the resonance of $\o_{\p^1}(1) \oplus \o_{\p^1}(1)$ on the projective line $\p^1$, see \cite[\S 7]{aprodu_linear_2025} for details.
\end{rem}

\begin{prop} \label{prop:2P_bQ}
Let $E = \o_C(2P) \oplus \o_C(bQ)$, with $b \ge 3$. Then $\overline{\r_2(E)}$ and $\r_b(E)$ are the irreducible components of the resonance of $E$.
\end{prop}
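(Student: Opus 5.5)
By Corollary \ref{cor:saturated_embeddings_a<b} with $a=2<b$, the saturated subpencils of $E$ are easy to list. Degree $d>b$ and $2<d<b$ are excluded. A sub-line bundle of degree $d<2$ has at most one section, so it is not a pencil. Hence the only saturated subpencils are $\o_C(bQ)$, in degree $b$, and $\o_C(2P)$, in degree $2$. This gives
\[
\r(E)=\r_2(E)\sqcup \r_b(E).
\]
I would then describe each stratum explicitly.

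For $\r_b(E)$, part (b) of the corollary says that $\o_C(bQ)$ embeds uniquely and saturated. The fiber of $\rho_b$ is therefore $\p H^0(\o_C(bQ))\cong\p^{b-1}$, sitting linearly inside $\p H^0(E)=\p(H^0(\o_C(2P))\oplus H^0(\o_C(bQ)))$ as the second summand. By Theorem \ref{thm_flattenning_stratification}, this stratum is closed, since $b$ is the maximal degree occurring.

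For $\r_2(E)$, Remark \ref{rem:U_L} together with part (d) of the corollary shows that $\overline{\r_2(E)}$ is the image of
\[
\theta_L:\p H^0(\o_C(2P))\times \p H^0(\o_C\oplus \o_C(bQ-2P))\to \p H^0(E),
\]
where $L=\o_C(2P)$ and $U_L$ is nonempty and open. A general section of $\o_C(bQ-2P)$, which has degree $b-2\ge1$, is nonzero; paired with the constant section $1$ of $\o_C$, it has no common zeros. The image of $\theta_L$ is irreducible, being the image of $\p^1\times\p^{b-2}$, which is irreducible. Concretely, it consists of the points $[u\cdot\lambda,\ u\cdot s]$ with $u\in H^0(\o_C(2P))$, $\lambda\in\c$ and $s\in H^0(\o_C(bQ-2P))$. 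The map $\theta_L$ is injective: from the pair $(\lambda u, us)$ with $\lambda\neq0$ one recovers $[u]$ and $[\lambda:s]$. When $\lambda=0$, the image point is $[0,us]$, which lies in $\p H^0(\o_C(bQ))$. So $\overline{\r_2(E)}$ is irreducible of dimension $1+(b-2)=b-1$.

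It remains to check that neither component contains the other. This is the only real point, and I expect it to be the main obstacle. First, $\r_b(E)\not\subseteq\overline{\r_2(E)}$, because the latter meets $\p H^0(\o_C(bQ))$ only in the locus $\{[0,us]\}$. That locus is the image of the multiplication $\p^1\times\p^{b-3}\to\p^{b-1}$ and has dimension at most $b-2<b-1$. Second, $\overline{\r_2(E)}\not\subseteq\r_b(E)$, because points with $\lambda u\neq0$ have nonzero first coordinate. Both sets are closed and irreducible and their union is $\r(E)$, so they are exactly the irreducible components, and they happen to have the same dimension $b-1$.
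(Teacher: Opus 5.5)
Your proof is correct and follows the paper's argument. Corollary \ref{cor:saturated_embeddings_a<b} leaves only the subpencils $\o_C(2P)$ and $\o_C(bQ)$, $\overline{\r_2(E)}$ is the image of $\theta_L$, which is injective on $\p H^0(\o_C(2P))\times U_L$, so $\r_2(E)\cong\p^1\times\a^{b-2}$, and $\r_b(E)\cong\p^{b-1}$. The only difference is that you check non-containment explicitly, whereas the paper just notes that two distinct closed irreducible sets of the same dimension $b-1$ cannot contain one another, so your "main obstacle" is immediate.
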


\begin{proof}
Due to Corollary \ref{cor:saturated_embeddings_a<b}, the only line bundles that can be embedded as saturated subpencils into $E$ are $L = \o_C(2P)$ and $M = \o_C(bQ)$. Therefore, the degree stratification of the resonance consists of two strata. The stratum $\r_2(E)$ is simply the fiber of $\rho_2$ over $L = \o_C(2P)$, which is the image of
\[
\theta_L : \p H^0(\o_C(2P)) \times U_L \longrightarrow \p H^0(E),
\]
where the locus $U_L \subseteq \p H^0(\o_C \oplus \o_C(bQ - 2P))$ of saturated embeddings of $L$ into $E$ may be identified with $H^0(\o_C(bQ - 2P))$. It can be readily seen that the map $\theta_L$ is injective and hence, the stratum $\r_2(E)$ is isomorphic to $\p^1 \times \a^{b-2} \hooklongrightarrow \p^{b+1}$.

The closed stratum $\r_b(E)$ is the fiber of $\rho_b$ over $M = \o_C(bQ)$, which is simply the image of the map
\[
\theta_M : \p H^0(\o_C(bQ)) \hooklongrightarrow \p H^0(E). 
\]
Therefore, $\r_b(E)$ identifies with $\p^{b-1} \hooklongrightarrow \p^{b+1}$. Since $\overline{\r_2(E)}$ and $\r_b(E)$ are irreducible of the same dimension, we infer that these are the irreducible components of $\r(E)$.
\end{proof}

We are left with the case where $a \ge 3$. We shall analyze the case $(a,b) = (3,3)$ separately, in the following two examples.

\begin{exmp} \label{ex:3P_not_sim_3Q}
Let $E = \o_C(3P) \oplus \o_C(3Q)$, with $P,Q \in C$ and $3P \not\sim 3Q$.  The resonance of $E$ consists of the strata $\r_2(E)$ and $\r_3(E)$. Let us begin with the closed stratum $\r_3(E)$. By Corollary \ref{cor_saturated_embeddings_a=b}, the only line bundles of degree $3$ that can be embedded in a saturated way into $E$ are $\o_C(3P)$ and $\o_C(3Q)$. Consequently, 
\[
\r_3(E) = \p H^0(\o_C(3P)) \sqcup \p H^0(\o_C(3Q)) \cong \p^2 \sqcup \p^2 \hooklongrightarrow \p^5.
\]
Let us pass to $\r_2(E)$ and first remark that any $L = \o_C(2R)$ can be embedded into $E$ as a saturated sub-line bundle and, in fact, $U_L \subseteq \p H^0(\o_C(3P - 2R) \oplus \o_C(3Q - 2R))$ is isomorphic to the projective line $\p^1$ with two points removed. Since the maps
\[
\theta_L : \p H^0(\o_C(2R)) \times \p H^0(\o_C(3P - 2R) \oplus \o_C(3Q - 2R)) \longrightarrow \p H^0(E)
\]
are quasi-finite, we infer that the morphism
\[
\rho_2 : \r_2(E) \longrightarrow W^1_2(C) = \pic^2(C) \cong C
\]
has equidimensional and irreducible fibers.
Now, since the curve $C$ is irreducible, there is an irreducible component $Z_0$ of $\r_2(E)$ that dominates $C$. Moreover, by the irreducibility of the fibers of $\rho_2$, we see that, in fact, $Z_0$ is the unique irreducible component of $\r_2(E)$ dominating $C$ and $\dim Z_0 = \dim \r_2(E) = 3$, by the fiber dimension theorem, see \cite[Theorem 1.25]{shafarevich_2013}.

Now, set-theoretically, the closure of $\r_2(E)$ is, by Remark \ref{rem:U_L}, the disjoint union of the images of the morphisms
\[
\theta_L : \p H^0(L) \times \p H^0(E(-L)) \longrightarrow \p H^0(E).
\]
We will show that $\r_3(E)$ is contained in the closure of the stratum $\r_2(E)$. 
Let us consider a non-zero section $s \in H^0(\o_C(3P))$ and write the divisor of zeros $D = (s)_0$ as $D_s + R'$, where $D_s$ is an effective divisor of degree $2$ and $R' \in C$. 
Let $t_s$ be the unique section (up to scalar multiplication) of $L_s = \o_C(3P - R')$ corresponding to $D_s$ and $t'$ be the unique section of $\o_C(R')$. 
Then
\[
\bigl( [t_s], [t' \oplus 0] \bigr) \xmapsto{\theta_{L_s}} [s \oplus 0].
\]
Consequently, $\p H^0(\o_C(3P)) \subseteq \overline{\r_2(E)}$. 
Similarly, $\p H^0(\o_C(3Q))$ is contained in $\overline{\r_2(E)}$. 
Furthermore, for dimensional reasons, these two copies of $\p^2$ are in fact contained in the closure of the maximal irreducible component $Z_0$ of the stratum $\r_2(E)$.
Thus, the resonance of $E$ is the closure of $\r_2(E)$.
\end{exmp}

\begin{exmp} \label{ex:3P_sim_3Q}
Let $E = \o_C(3P) \oplus \o_C(3P)$, with $P \in C$. By Corollary \ref{cor_saturated_embeddings_a=b}, the only line bundle with at least two independent global sections admitting a saturated embedding into $E$ is $L = \o_C(3P)$. Hence $\r(E) = \r_3(E)$, which is the image of 
\[
\theta_L : \p H^0(\o_C(3P)) \times \p H^0(\o_C \oplus \o_C) \longrightarrow \p H^0(E).
\]
Note that $\theta_L$ is just the Segre embedding $\sigma_{2,1}: \p^2 \times \p^1 \hooklongrightarrow \p^5$.
\end{exmp}

The arguments used in Example \ref{ex:3P_not_sim_3Q} can be extended to the remaining cases, as follows.

\begin{thm} \label{thm:general_split_bundles}
Let $E = \o_C(aP) \oplus \o_C(bQ)$, with $3 \le a \le b$, but $(a,b) \neq (3,3)$ and $P, Q \in C$. 
Then, the resonance of $E$ is equal to $\overline{\r_2(E)}$, which is a projective variety of dimension $a + b - 3$.
Moreover, $\r(E)$ has a unique irreducible component of maximal dimension.
\end{thm}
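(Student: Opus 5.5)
We follow the strategy of Example \ref{ex:3P_not_sim_3Q}. The argument has three parts: describe the stratum $\r_2(E)$, show it has a unique component of maximal dimension, and show that every other stratum lies in its closure.

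\textbf{Step 1: the stratum $\r_2(E)$.} By Corollaries \ref{cor:saturated_embeddings_a<b} and \ref{cor_saturated_embeddings_a=b}, the nonempty strata $\r_d(E)$ occur only for $2\le d\le a-1$ (every degree in this range occurs), $d=a$ (for $L=\o_C(aP)$, or for $\o_C(aQ)$ when $a=b$) and $d=b$. For $d=2$, every $L=\o_C(2R)$ embeds saturatedly, since $2<a-1$ or $2<a$ under our hypotheses. The space $\p H^0(E(-L))$ has dimension $a+b-5$, and $U_L$ is a dense open subset of it by Remark \ref{rem:U_L}.

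\textbf{Step 2: dimension and the unique top component.} Here we want $\theta_L$ to be quasi-finite on $\p H^0(L)\times U_L$. Suppose $[s]=\theta_L(u,v)$ with $v\in U_L$. Then the saturation of $\langle s\rangle$ is the image of $L$, so $v$ is determined up to scalar by the embedding, and $u$ is determined as well. Hence $\theta_L$ is injective on $\p H^0(L)\times U_L$, and since it is a restriction of a linear-type map, we get fibres $\rho_2^{-1}(L)$ that are irreducible of dimension $1+(a+b-5)=a+b-4$. This gives $\dim\r_2(E)=a+b-3$, and one must check $\dim\r_2(E)\ge\dim\r_d(E)$ for all $d$ and that a single component dominates $\pic^2(C)$. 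Since $\rho_2$ has irreducible equidimensional fibres over the irreducible curve $C$, the fibre-dimension argument of Example \ref{ex:3P_not_sim_3Q} yields a unique component $Z_0$ of $\r_2(E)$ dominating $C$, of dimension $a+b-3$. Any other component of $\r_2(E)$ would lie over finitely many points and thus have dimension at most $a+b-4$; but because fibres are irreducible, there are in fact no other components, and $\overline{\r_2(E)}=\overline{Z_0}$ is irreducible. The remaining claim in the statement, that $\r(E)$ has a unique irreducible component of maximal dimension, then follows from Step 3, since $\r(E)=\overline{Z_0}$ is itself irreducible.

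\textbf{Step 3: $\r_d(E)\subseteq\overline{\r_2(E)}$ for all $d\ge3$.} This is the main step. Using Remark \ref{rem:U_L}, it suffices to show that every point of $\mathrm{im}\,\theta_M$, for $M$ of degree $d\ge3$, is $\theta_L(u,v)$ for some $L$ of degree $2$ and some $(u,v)$, not necessarily with $v\in U_L$. The reason this suffices is that $\overline{\r_2(E)}$ contains the full images of all $\theta_L$ by Remark \ref{rem:U_L}, since each $U_L\neq\emptyset$.

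To do this, take $[s]=\theta_M(x,y)$ with $x\in H^0(M)$ and $y\in H^0(E(-M))$. Write the divisor $(x)_0=D_2+D'$, where $D_2$ is effective of degree $2$ and $D'$ is effective of degree $d-2$. Set $L=\o_C(D_2)$ with section $u$ vanishing on $D_2$, and let $x'\in H^0(\o_C(D'))$ vanish on $D'$. Then $x=u\cdot x'$ up to scalar, so
\[
s=x\cdot y=u\cdot(x'y)\quad\text{with } x'y\in H^0(E(-L)),
\]
and therefore $[s]=\theta_L([u],[x'y])$. This is exactly the factorization used for $\p H^0(\o_C(3P))$ in Example \ref{ex:3P_not_sim_3Q}. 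It yields $\r(E)=\overline{\r_2(E)}$, which is a projective variety of dimension $a+b-3$.

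\textbf{Expected obstacle.} The delicate point is the injectivity and quasi-finiteness in Step 2, so that the fibre dimension is exactly $a+b-4$ and the fibres are irreducible. This is where the exclusion of $(a,b)=(3,3)$ enters: when $a=b=3$ with $3P\sim3Q$, the stratum $\r_2(E)$ is empty, as in Example \ref{ex:3P_sim_3Q}. We also need $U_L\ne\emptyset$ for every $L$ of degree $2$. We would verify this via the "no common zeros" criterion of Lemma \ref{lem:saturated_embedding}, with a Bertini-type choice of sections of $\o_C(aP-2R)$ and $\o_C(bQ-2R)$, using that these line bundles have degree at least $1$ and $2$ respectively.
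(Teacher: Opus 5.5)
Your Steps 1--3 are essentially the paper's proof. You use the same factorization $\theta_M(x,y)=\theta_L(u,x'y)$ to place every stratum in $\overline{\r_2(E)}$. You use the same finiteness of $\theta_L$ on $\p H^0(L)\times U_L$, and your injectivity argument via the saturation of $\langle s\rangle$ is fine. You also use the same fibre-dimension argument for $\rho_2$ over $\pic^2(C)\cong C$.

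The one inference that fails as written is ``because fibres are irreducible, there are in fact no other components'', from which you deduce that $\overline{\r_2(E)}=\overline{Z_0}$ is irreducible. The principle ``irreducible base and irreducible fibres of constant dimension imply an irreducible source'' needs an extra hypothesis on the map, typically that it be closed. Here $\rho_2$ is defined only on the locally closed stratum $\r_2(E)$. Dominance of $Z_0$ only tells you that $\rho_2(Z_0)$ contains a dense open subset of $C$. So nothing you have said rules out a point $L\notin\rho_2(Z_0)$ whose fibre $\rho_2^{-1}(L)$ is a component by itself. You do not need irreducibility, though. Your previous sentence (every other component lies in a single fibre, so has dimension at most $a+b-4$) already gives a unique component of maximal dimension. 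That is all the theorem asserts and exactly how the paper concludes, so derive the final claim from that instead.

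Irreducibility is in fact true here, but it needs a global rather than fibrewise argument. Since $a\ge 3$, the spaces $H^0(\o_C(aP-D))\oplus H^0(\o_C(bQ-D))$ have constant dimension $a+b-4$ as $D$ runs over $C^{(2)}$. They therefore form a vector bundle on $C^{(2)}$, whose projectivization $\mathcal{P}$ is irreducible and projective of dimension $a+b-3$. Let $s_D$ be the section of $\o_C(D)$ vanishing on $D$. The morphism $(D,[x'\oplus y'])\mapsto[s_Dx'\oplus s_Dy']$ maps $\mathcal{P}$ onto the set of classes $[x\oplus y]\in\p H^0(E)$ such that $x$ and $y$ are both divisible by some $s_D$ with $D\in C^{(2)}$. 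This set is precisely $\r(E)$: the saturation of the subsheaf generated by $x\oplus y$ is $\o_C(Z)$, with $Z$ the common part of the zero divisors, and it is a subpencil iff $\deg Z\ge 2$. This gives more than the paper states, so if you keep the irreducibility claim, this is the argument to use.
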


\begin{proof}
First of all, by Remark \ref{rem:U_L}, for any $d \ge 2$, the closure of the stratum $\r_d(E)$ is set-theoretically the disjoint union of the images of the maps
\[
\theta_L:\p H^0(L)\times \p H^0(E(-L)) \longrightarrow \p H^0(E)
\]
where $L \in W^1_d(E) = \pic^d(C)$ such that $U_L \neq \emptyset$, that is, $L$ admits a saturated embedding into $E$. The map $\theta_L$ is given by
\[
\bigl( [s], [s_a \oplus s_b] \bigr) \xmapsto{\theta_L} [s \cdot s_a \oplus s \cdot s_b].
\]
By Corollaries \ref{cor:saturated_embeddings_a<b} and \ref{cor_saturated_embeddings_a=b}, the locus $U_L$ is non-empty for any $L \in \pic^2(C)$.

Now, let us fix an integer $d \ge 2$, a line bundle $L \in W^1_d(E)$ such that $U_L \neq \emptyset$ and a non-zero section $s \in H^0(L)$. 
Writing the divisor of zeros $D = (s)_0$ as a sum of effective divisors $D_s + D'$, such that $\deg D_s = 2$ and $\deg D' = d - 2$ and taking the sections $t_s \in H^0(L(-D'))$ and $t' \in H^0(L(-D_s))$ such that $D_s = (t_s)_0$ and $D' = (t')_0$ we see that the sections $s$ and $t_s \cdot t'$ coincide up to scalar multiplication and
\[
\theta_L \bigl( [s], [s_a \oplus s_b] \bigr) = \theta_{L(-D')} \bigl( [t_s], [s_a \cdot t' \oplus s_b \cdot t'] \bigr).
\]
Therefore, $\overline{\r_d(E)}$ is contained in the closure of the stratum $\r_2(E)$. 
Furthermore, for any line bundle $L \in W^1_2(E)$, the map $\theta_L$ has finite fibers and the image of $\p H^0(L) \times U_L$ via $\theta_L$ is irreducible of dimension $a + b - 4$. 
Hence, the fibers of the morphism
\[
\rho_2: \r_2(E) \longrightarrow W^1_2(C) = \pic^2(C) \cong C
\]
onto the curve $C$ are irreducible of the same dimension. Thus, there is an irreducible component $Z_0$ of the stratum $\r_2(E)$ whose image through $\rho_2$ is dense in $C$. Furthermore, since the fibers of $\rho_2$ are irreducible, if a fiber of $\rho_2$ intersects $Z_0$, it is in fact entirely contained in $Z_0$ and hence, $Z_0$ is the unique irreducible component of $\r_2(E)$ dominating $C$. By \cite[Theorem 1.25]{shafarevich_2013}, the conclusion follows.
\end{proof}

\begin{rem}
It is worth noting that Theorem \ref{thm:general_split_bundles} strengthens the more general result proved in \cite[Proposition 3.7]{aprodu_linear_2025} in our particular setting: the irreducible components of $\r(E)$ are the irreducible components of the closure of the stratum $\r_2(E)$. Determining these components remains an open problem. We also note that, under the hypotheses of Theorem \ref{thm:general_split_bundles}, the resonance is connected. Indeed, since $\bigwedge^2 E \cong \o_C(aP +bQ)$ has precisely $a+b$ independent sections, the kernel of the determinant map of $E$ has dimension at least
\[
\dim \textstyle{\bigwedge^2}H^0(E) - \dim H^0(\textstyle{\bigwedge^2E}) =  \binom{a+b}{2} - (a+b).
\]
Therefore, the dimension of the linear subspace of $\bigwedge^2H^0(E)^\vee$ defining the resonance of $E$ is at most $a + b$. Now, since $a + b > 4$, the conclusion easily follows taking into consideration Remark \ref{rem:connected}.
\end{rem}

\begin{rem} \label{rem:non-empty_strata}
By Corollaries \ref{cor:saturated_embeddings_a<b} and \ref{cor_saturated_embeddings_a=b}, if $3\le a \le b$ and $E = \o_C(aP) \oplus \o_C(bQ)$, the non-empty strata of the resonance of $E$ --- corresponding to the non-empty strata $\w_d(E)$ --- are $\r_2(E), \ldots, \r_a(E)$ and $\r_b(E)$, except for the case where $a = b$ and $aP \sim aQ$. In the latter case, $\w_{a-1}(E) = \r_{a-1}(E) = \emptyset$.
\end{rem}

We now turn our attention to the corresponding linear section $\g(E)$ of the Grassmannian $\g = \gr(2, H^0(E))$ and analyze its stratification described in Remark \ref{rem_g(E)} in the case of a split vector bundle $E = \o_C(aP) \oplus \o_C(bQ)$, with $2 \le a \le b$. 

\begin{exmp} \label{ex:gr_2P_2Q}
If $E = \o_C(2P) \oplus \o_C(2Q)$, with $2P \not\sim 2Q$, then, by Corollary \ref{cor_saturated_embeddings_a=b} and Remark \ref{rem_g(E)}, $\g(E) = \g_2(E)$ consists of two points, whereas, if $E = \o_C(2P) \oplus \o_C(2P)$, then $\g(E) = \g_2(E)$ identifies with the projective line $\p^1$.
\end{exmp}

\begin{exmp} \label{ex:gr_2P_bQ}
If $E = \o_C(2P) \oplus \o_C(bQ)$, with $b \ge 3$, then, by Corollary \ref{cor:saturated_embeddings_a<b} and Remark \ref{rem_g(E)}, $\g(E)$ has two strata: $\g_2(E)$, which identifies with the affine space $\a^{b-2}$ and $\g_b(E)$, which identifies with the Grassmannian $\gr(2, b)$.
\end{exmp}

Now, let us consider $E = \o_C(aP) \oplus \o_C(bQ)$, such that $3 \le a < b$. By Remark \ref{rem_g(E)}, for each $d \in \{2, \ldots, a-1 \}$, there is a surjective morphism
\[
\g_d(E) \longrightarrow \pic^d(C) \cong C
\]
whose fibers are the images of the maps
\[
\tau_L : \gr(2, H^0(L)) \times U_L \longrightarrow \gr(2, H^0(E)),
\]
where $L \in \pic^d(C)$ and $U_L$ is the dense open subset of $\p H^0(E(-L))$ corresponding to the saturated embeddings of $L$ into $E$. Since the maps $\theta_L$ are quasi-finite, so are maps $\tau_L$, see \cite[Proposition 4.4]{aprodu_linear_2025}. Thus, arguing similarly as in Example \ref{ex:3P_not_sim_3Q} and Theorem \ref{thm:general_split_bundles}, the strata $\g_d(E)$ are of dimension 
\[
2(d - 2) + a + b - 2d = a + b - 4
\]
and have a unique irreducible component of maximal dimension. The other non-empty strata are $\g_a(E)$ --- which is isomorphic to $\gr(2,a) \times \a^{b-a}$ and has also dimension $a + b - 4$ --- and $\g_b(E)$, which can be identified with $\gr(2, b)$. Therefore, since the strata $\g_2(E), \ldots, \g_a(E)$ are equidimensional, the closures of their unique irreducible components of maximal dimension belong to the set of irreducible components of $\g(E)$, together with the closed stratum $\g_b(E)$.

It can be readily shown, by similar arguments, that if $E = \o_C(aP) \oplus \o_C(aQ)$, with $aP \not\sim aQ$, then the strata $\g_2(E), \ldots, \g_{a-1}(E)$ each have a unique irreducible component of maximal dimension $2a - 4$ and the closed stratum $\g_a(E)$ is simply a disjoint union of two copies of $\gr(2,a)$. An analogues statement holds for $E = \o_C(aP) \oplus \o_C(aP)$, except that $\g_{a-1}(E)$ is empty and the closed stratum is now isomorphic to $\gr(2, a) \times \p^1$.

Collecting the above, we obtain the following.

\begin{prop} \label{prop:ired_comp_g(E)}
If $E = \o_C(aP) \oplus \o_C(bQ)$, with $2 \le a \le b$, the number of irreducible components of $\g(E)$ is at least the number of non-empty strata $\g_d(E)$.
\end{prop}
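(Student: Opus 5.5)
The plan is to associate to each non-empty stratum $\g_d(E)$ a distinct irreducible component of $\g(E)$. The tool is a general topological fact. Suppose $\g(E)=\coprod_d \g_d(E)$ is a finite stratification by locally closed subsets. Suppose also that some stratum $\g_d(E)$ has an irreducible component $Z_d$ whose dimension is at least the dimension of every stratum it could lie in the closure of. Then $\overline{Z_d}$ is an irreducible component of $\g(E)$.

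To see this, let $Y$ be an irreducible component of $\g(E)$ containing $\overline{Z_d}$. Since the strata are finitely many, $Y$ is the closure of $Y\cap \g_e(E)$ for a unique $e$ with $Y\cap\g_e(E)$ dense in $Y$. Then $\dim Y=\dim(Y\cap \g_e(E))\le \dim \g_e(E)$. By the analogue for $\g(E)$ of the closure relation in Theorem~\ref{thm_flattenning_stratification}, we must have $e\le d$, since $Z_d\subseteq \overline{\g_e(E)}$. Hence $\dim Y\le \max_{e\le d}\dim \g_e(E)\le \dim Z_d$, so $Y=\overline{Z_d}$.

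Distinct strata $d\ne d'$ give distinct components. Indeed, $\overline{Z_d}\cap \g_d(E)$ contains the dense subset $Z_d$, while $\overline{Z_{d'}}$ meets $\g_d(E)$ only in a nowhere dense subset of $\overline{Z_{d'}}$. Therefore counting such components yields the inequality.

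It remains to verify the dimension hypothesis case by case, using the descriptions collected before the statement.

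For $(a,b)=(2,2)$, there is only one stratum, so the claim is trivial. The same holds for $E=\o_C(2P)^{\oplus 2}$.

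For $(2,b)$ with $b\ge 3$, Example~\ref{ex:gr_2P_bQ} applies. The closed stratum $\g_b(E)\cong\gr(2,b)$ is irreducible and closed, and no other stratum specializes into a higher $e$ beyond it. So we may take $Z_b=\g_b(E)$; it is a component because nothing lies above it in the closure order. Then $\g_2(E)\cong\a^{b-2}$ is irreducible. The only stratum $e\le 2$ is itself, so the hypothesis holds.

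For $3\le a<b$:
\begin{itemize}
\item For $2\le d\le a$, the strata $\g_d(E)$ all have dimension $a+b-4$. Each has a unique top-dimensional irreducible component, and $\g_a\cong\gr(2,a)\times\a^{b-a}$ is irreducible. So for $d\le a$ the condition $\dim Z_d\ge\dim\g_e(E)$ for $e\le d$ holds with equality.
\item For $d=b$, $\g_b(E)$ is closed and maximal in the order, so it is its own component.
\end{itemize}

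For $a=b$, the same argument applies with dimension $2a-4$ in every stratum. In the case $aP\not\sim aQ$, the closed stratum consists of two copies of $\gr(2,a)$, each of dimension $2a-4$, which even gives extra components. In the case $aP\sim aQ$, the closed stratum is $\gr(2,a)\times\p^1$, of dimension $2a-3$, and $\g_{a-1}(E)=\emptyset$.

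The main obstacle I expect is justifying the equidimensionality claims for the strata $\g_d(E)$ with $d<a$. One needs that $\tau_L$ is quasi-finite (\cite[Proposition 4.4]{aprodu_linear_2025}) and that the fibers over $\pic^d(C)$ are irreducible of constant dimension $2(d-2)+(a+b-2d)$. The argument is the one from Theorem~\ref{thm:general_split_bundles}: a dominating component over the irreducible curve $C$, fiber dimension theorem. That theorem also gives the unique top-dimensional component $Z_d$.

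A second point of care is the closure relation for $\g(E)$, namely $\overline{\g_d(E)}\subseteq\bigcup_{e\ge d}\g_e(E)$. I would take it from \cite[\S 3-4]{aprodu_linear_2025}, as cited in Remark~\ref{rem_g(E)}. It holds because saturation degree can only increase under specialization.
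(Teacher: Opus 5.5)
Your strategy is the paper's own: all strata $\g_2(E),\dots,\g_a(E)$ have the same dimension, each has a unique top-dimensional component, and the closure relation then makes the closures of these components into distinct components of $\g(E)$. Your topological lemma and your distinctness argument state this correctly, and more explicitly than the paper does.

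The one invalid step is about the closed stratum $\g_b(E)$. Twice you call it a component ``because nothing lies above it in the closure order.'' That reads the closure relation backwards. Since $\overline{\g_e(E)}\subseteq\bigcup_{e'\ge e}\g_{e'}(E)$, lower strata degenerate into higher ones. So the closed, maximal stratum is exactly the one that can end up inside the closure of a lower stratum.

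This really happens in the same paper. In Theorem \ref{thm:general_split_bundles}, the closed stratum $\r_b(E)$ is contained in $\overline{\r_2(E)}$, and so is not a component. Your own lemma, applied with $d=b$, requires comparing with every $e\le b$, that is, with all strata. You never made that comparison.

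The needed inequality does hold:
\begin{itemize}
\item for $a=2<b$: $\dim\g_b(E)=2b-4>b-2=\dim\g_2(E)$;
\item for $3\le a<b$: $2b-4>a+b-4$.
\end{itemize}
For $a=b$ you already recorded the dimensions ($2a-4$, respectively $2a-3$), and these suffice. Once the inequality is inserted in the $a<b$ cases, your proof is complete.

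A smaller slip concerns the fibres of $\g_d(E)\to\pic^d(C)$. They have dimension $2(d-2)+(a+b-2d-1)=a+b-5$, because $U_L$ is open in $\p H^0(E(-L))\cong\p^{a+b-2d-1}$. The number $2(d-2)+(a+b-2d)=a+b-4$ that you attribute to the fibres is the dimension of the stratum, after adding the one-dimensional base. The distinction matters: it is what makes $\dim\g_d(E)$ for $d<a$ equal to $\dim\g_a(E)=a+b-4$, and hence what makes your dimension hypothesis hold for $Z_a$.
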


\begin{cor} \label{cor:ired_g(E)}
If $E = \o_C(aP) \oplus \o_C(bQ)$, with $2 \le a \le b$, then $\g(E)$ is irreducible if and only if $a = b$, $a \in \{2, 3\}$ and $aP \sim aQ$.
\end{cor}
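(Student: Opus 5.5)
The plan is to combine Proposition \ref{prop:ired_comp_g(E)} with a case-by-case count of the non-empty strata $\g_d(E)$. By Proposition \ref{prop:ired_comp_g(E)}, $\g(E)$ can only be irreducible if it has exactly one non-empty stratum, and then that single stratum must itself be irreducible. We therefore first determine when exactly one stratum is non-empty, and then check irreducibility of the surviving cases directly.

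\emph{Counting strata.} Suppose $a<b$. For $a=2$, Example \ref{ex:gr_2P_bQ} gives the two non-empty strata $\g_2(E)$ and $\g_b(E)$. For $a\ge 3$, the discussion preceding Proposition \ref{prop:ired_comp_g(E)} shows that $\g_2(E),\dots,\g_a(E),\g_b(E)$ are all non-empty, which is at least three strata. Hence $a<b$ never yields an irreducible $\g(E)$. Now suppose $a=b$ with $aP\not\sim aQ$. For $a=2$, Example \ref{ex:gr_2P_2Q} gives two points, which is reducible. For $a\ge 3$, the closed stratum $\g_a(E)$ is already a disjoint union of two copies of $\gr(2,a)$, so it is reducible, and it is a closed subset of $\g(E)$. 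Every point of $\g(E)$ lies in some stratum, and by Remark \ref{rem:non-empty_strata} the stratum $\g_2(E)$ is non-empty as well. In either subcase there are at least two strata, or a reducible closed piece, so $\g(E)$ is reducible. Finally suppose $a=b$ with $aP\sim aQ$. Then $E\cong\o_C(aP)^{\oplus 2}$. For $a\ge 4$, the strata $\g_2(E),\dots,\g_{a-2}(E)$ are non-empty, since only $\g_{a-1}(E)$ vanishes, and $\g_a(E)$ is non-empty too. That gives at least two strata, so $\g(E)$ is reducible.

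\emph{Sufficiency.} For $a=2$ and $aP\sim aQ$, Example \ref{ex:gr_2P_2Q} gives $\g(E)\cong\p^1$, which is irreducible. For $a=3$, the only subpencil is $\o_C(3P)$, as in Example \ref{ex:3P_sim_3Q}. So $\g(E)=\g_3(E)$, which is the image of $\tau_L:\gr(2,3)\times\p^1\to\gr(2,H^0(E))$. This image is irreducible, being the image of an irreducible variety, and it is isomorphic to $\gr(2,3)\times\p^1$ by the discussion preceding Proposition \ref{prop:ired_comp_g(E)}.

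\emph{Main obstacle.} The delicate step is the reducibility argument when $a=b$ and several strata survive. The fact that two strata are non-empty does not by itself imply reducibility, since one stratum could lie in the closure of another. That is precisely why I rely on Proposition \ref{prop:ired_comp_g(E)}, whose content is that the maximal-dimensional components of the strata give distinct irreducible components of $\g(E)$. For $a=b\ge 3$, this rests on the strata $\g_2(E),\dots$ all having the same dimension $2a-4$, which equals $\dim \gr(2,a)\times\p^1$ in the case $aP\sim aQ$. So the key point to verify is this equidimensionality, which prevents any stratum from lying in the closure of another.
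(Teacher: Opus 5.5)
Your proof is correct and takes the paper's route: the corollary follows from Proposition \ref{prop:ired_comp_g(E)} together with the list of non-empty strata. The only one-stratum cases are $a=b=2$, where $\g(E)$ is two points or $\p^1$, and $a=b=3$ with $3P\sim 3Q$, where $\g(E)\cong\gr(2,3)\times\p^1$; these you correctly checked by hand.

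There is one slip in your closing remark, though it does not affect the argument because you invoke the Proposition as given. $\dim \gr(2,a)\times\p^1 = 2a-3$, not $2a-4$, so for $a=b\ge 4$ with $aP\sim aQ$ the strata are \emph{not} equidimensional. The components still separate for a different reason: the closed stratum $\g_a(E)$ has dimension $2a-3$, so it cannot lie in any $\overline{\g_d(E)}$ with $d\le a-2$ (these have dimension $2a-4$), and it is disjoint from those strata.
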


One cannot fail to notice the similarity between the results concerning $\r(E)$ and $\g(E)$ in the context of split rank-two bundles over elliptic curves and those for rank-two vector bundles over the projective line, see \cite[Propositions 4.8, 4.10]{aprodu_linear_2025}. Nevertheless, as shown in Proposition \ref{prop:2P_bQ}, the resonance of the vector bundles $\o_C(2P) \oplus \o_C(bQ)$, with $3 \le b$, is not irreducible, whereas the resonance of any rank-two bundle over $\p^1$ is always an irreducible variety.

We finally remark the extensive use of Corollaries \ref{cor_saturated_embeddings_a=b} and \ref{cor:saturated_embeddings_a<b} throughout our work. These are specific to the elliptic curves case. In higher genus, the problem is likely more delicate, and our present approach may not directly extend to a more general setting, even in the split case.

\printbibliography

\end{document}